    \let\Cref\crtCref
    \let\cref\crtcref
\newcommand{%
    
    \import{.//}{.pdf_tex}
}[3][1]{%
    
    \import{./#2/}{#3.pdf_tex}
}
\newcommand{\R}{\mathbb{R}}             % Real numbers
\newcommand{\Rp}{\R_{\geq 0}}           % Non-negative reals
\newcommand{\Rpp}{\R_{>0}}              % Positive reals
\newcommand{\Np}{\N_{>0}}               % Positive naturals
\newcommand{\kinf}{\mathcal{K}_\infty}  % Class kappa-infinity functions
\newcommand{\N}{\mathbb{N}}             % Natural numbers
\newcommand{\rs}[1]{#1}
\newcommand{\rz}[1]{#1}
\newcommand{\fw}{{f}}                   % True dynamics
\newcommand{\f}{{f}}                    % Nominal dynamics
\newcommand{\w}{{\mathbb{W}}}           % Uncertainty
\newcommand{\X}{{\mathscr{X}}}          % Set of closed loop states
\newcommand{\XX}{{\mathcal{X}}}         % Tubes
\newcommand{\ZZ}{{\mathscr{Z}}}         % Set of closed loop state-input pairs
\newcommand{\U}{{\mathscr{U}}}          % Feasible input space
\renewcommand{\l}{\ell}                 % Stage cost
\newcommand{\D}{{\mathcal{D}}}          % Data set
\newcommand{\1}{{\bm{1}}}               % Vector full of ones
\newlength{\len}
\newcommand{\alphaone}{\alpha_{1}}
\newcommand{\alphaeleven}{\alpha_{3}}
\newcommand{\alphatwelve}{\alpha_{4}}
\newcommand{\alphathree}{\alpha_{5}}
\newcommand{\alphafour}{\alpha_{6}}
\newcommand{\alphatwo}{\alpha_{2}}
\newcommand{\alphafive}{\alpha_{7}}
\newcommand{\alphasix}{\alpha_{8}}
\newcommand{\alphaseven}{\alpha_{9}}
\newcommand{\alphaeight}{\alpha_{10}}
\newcommand{\alphanine}{\alpha_{11}}
\newcommand{\alphaten}{\alpha_{12}}
\newcommand{\deltaone}{\delta_{1}}
\newcommand{\deltatwo}{\delta_{2}}               % Math snippets
\pgfplotsset{compat=newest}
\pgfplotsset{tick scale binop=\times}
\newtheorem{assumption}{Assumption}
\crefname{assumption}{Assumption}{Assumptions}
\crefname{lemma}{Lemma}{Lemmas}
\crefname{section}{Section}{Sections}
\crefname{subsection}{Section}{Sections}
\crefname{proposition}{Proposition}{Propositions}
\crefname{equation}{}{}
\crefname{algorithm}{algorithm}{algorithms}
\crefname{prob}{problem}{problems}
\crefname{theorem}{Theorem}{Theorems}
\crefname{remark}{Remark}{Remark}
\crefname{appendix}{Appendix}{Appendices}
\crefname{figure}{Figure}{Figures}        % Custom environments
\begin{document}

% Title
\title[Convergence guarantees for for adaptive MPC with kinky inference]{Convergence guarantees for adaptive model predictive control with kinky inference}

% Authors
\author{%
 \Name{Riccardo Zuliani} \Email{rzuliani@ethz.ch}
 \AND
 \Name{Raffaele Soloperto} \Email{soloperr@ethz.ch}
 \AND
 \Name{John Lygeros} \Email{jlygeros@ethz.ch}\\
 \addr All the authors are with the Automatic Control Laboratory, ETH Z\"urich%
}

% Print title
\maketitle
% Abstract
\begin{abstract}
We analyze the convergence properties of a robust adaptive model predictive control algorithm used to control an unknown nonlinear system. We show that by employing a standard quadratic stabilizing cost function, and by recursively updating the nominal model through \textit{kinky inference}, the resulting controller ensures convergence of the true system to the origin, despite the presence of model uncertainty. We illustrate our theoretical findings through a numerical simulation.
\end{abstract}
% Keywords
\begin{keywords}
Adaptive model predictive control, adaptive control, kinky inference.
\end{keywords}
% Acknowledgements
%%%%%%%%%%%%%%%%%%%%%%%%%%%%%%%%%%%%%%%%%%%%%%%%%%%%%%%%%%%%%%%%%%%%%%%%
% Text sources
%%%%%%%%%%%%%%%%%%%%%%%%%%%%%%%%%%%%%%%%%%%%%%%%%%%%%%%%%%%%%%%%%%%%%%%%
\section{Introduction}

\paragraph*{Motivation}

During the last few decades, model predictive control (MPC) has attracted large attention because of its efficiency in handling nonlinear systems subject to hard state and input constraints, while minimizing a user-defined cost function, \cite{rawlings2017model}. An MPC scheme employs a nominal model of the system dynamics to predict future trajectories over a given prediction horizon. However, in several applications, obtaining an accurate model can be expensive in terms of money and resources, \cite{darby2012mpc}. Model inaccuracies, combined with the presence of disturbances affecting the system, might lead to a deterioration of performance, constraint violation, or even instability, \cite{forbes2015model}.

To address this issue, research has focused on adaptive MPC schemes where the identification of the model is performed online together with the computation of the input. Since excitation is not explicitly enforced, such approaches are often referred to as passive-learning controllers, \cite{mesbah2018stochastic}.
One of the main drawbacks of passive-learning approaches is that newly generated data might not be informative (for example, when the system reaches a steady state), and therefore an improvement in the model estimate is not guaranteed. This issue is addressed in active-learning approaches where a form of excitation is explicitly induced, often by incorporating a learning cost into the MPC cost function, \cite{tanaskovic2019adaptive}, \cite{soloperto2019dual}. Even though excitation is beneficial for model adaption, it can be counterproductive in terms of stability. Motivated by this, we analyze the stability properties of an adaptive MPC scheme.

%These schemes are sometimes called ``dual'' MPC schemes, since the control action is used to fulfill two contrasting objectives: stabilizing the system while minimizing the control objective, and providing sufficient excitation to improve the accuracy of the model \cite{heirung2017dual}.

\paragraph*{Related works}
% \color{red}
%In the context of Adaptive MPC for uncertain linear parameter varying systems, \cite{soloperto2019dual} proposes an MPC scheme that utilizes set membership identification combined with recursive least squares regression to obtain both a nominal parameter estimate and a bound on the uncertainty, so that robust constraint satisfaction can be ensured.  proposes a similar method for time-varying systems. In \cite{parsi2022explicit} the authors improve an existing robust adaptive MPC solution by performing online updates of the terminal set and terminal control law, as well as allowing the tracking setpoint to change over time. These robust adaptive approaches are not limited to linear systems. The set-based method proposed in \cite{canale2014nonlinear} is applicable to unknown nonlinear Lipschitz continuous systems. In \cite{kohler2021robust} the authors introduce a computationally tractable tube-based adaptive MPC scheme with guaranteed constraint satisfaction which operates on nonlinear systems subject to both parametric uncertainty and additive disturbances.

% \color{black}

%Generally, however, dual MPC schemes achieve the objective of learning the system model thanks to the introduction of an additional term in the objective function, see \cite{soloperto2022guaranteed,soloperto2020augmenting}, \cite{heirung2017dual}. This term, sometimes called learning cost, may lead to deteriorated performance, and it represents a source of additional computational complexity.

A Gaussian process (GP) is a collection of random variables, any finite subset of which follows a multivariate Gaussian distribution, \cite{rasmussen2003gaussian}. In the context of adaptive MPC, GPs are employed to construct a nominal model and sets that bound, within a certain probability, the evolution of the true system and the learned one, \cite{hewing2019cautious}, \cite{bradford2020stochastic}. However, in the case where robust constraint satisfaction is needed, e.g., in safety-critical systems, GP-based adaptive MPC schemes cannot be employed as they fail to ensure hard constraint satisfaction.

Conversely, robust adaptive MPC approaches use a set-membership method to construct robust sets where the uncertainty lies, and then incorporate this knowledge into a robust tube-based MPC scheme, see, e.g., \cite{lorenzen2019robust}, \cite{lu2019robust}, \cite{tanaskovic2019adaptive}, \cite{kohler2021robust}, \cite{soloperto2019dual} for the case of systems subject to parametric uncertainty. If the system is subject to non-parametric uncertainty, then \textit{kinky inference} is a learning method that is able to learn an unknown nonlinear system, while providing uncertainty sets that robustly bound the mismatch between the obtained estimate and the actual system, \cite{calliess2014conservative}.
In \cite{calliess2020lazily}, the authors studied the theoretical properties of kinky inference in the context of regression for simple adaptive control problems.
Kinky inference has been successfully combined with model predictive control; for example, in \cite{limon2017learning}, a learning-based MPC scheme uses a kinky inference to model an unknown system while providing safety guarantees, and input-to-state stability. In \cite{manzano2019output}, the authors propose a smoothed version of the method that is more suitable for real-time control thanks to its lower computation effort. In this case, only soft constraints on the outputs are imposed. The approach is subsequently improved in \cite{manzano2022input}, where a kinky inference-based predictor that is guaranteed to be Lipschitz continuous is considered. Even though the authors show that the origin is input-to-state stable, convergence is not shown.

\paragraph*{Contribution}
In this paper, we perform a stability analysis of a robust adaptive MPC scheme where the model is learned through a kinky inference.
In particular, in addition to input-to-state stability, we show that standard robust adaptive MPC schemes can successfully ensure convergence of the closed-loop to the origin, despite the presence of model uncertainty. This theoretical result is obtained by only employing a tracking cost function, without the need to enforce any kind of excitation in the system, i.e., in a passive-learning fashion.

\paragraph*{Outline}

\cref{section:PF} introduces the unknown system under consideration, shows how kinky inference can be used for modeling, and discusses the MPC scheme used for control. \cref{section:TA} demonstrates that the closed-loop dynamics are input-to-state stable, and that, in addition, the closed-loop system asymptotically converges to the origin. Simulations are presented in \cref{section:EX}, while Section \ref{section:conclusion} concludes the paper.

\paragraph*{Notation}
We use $\Rp$ and $\Rpp$ to represent the sets of non-negative and positive real numbers, respectively. A function $\alpha: \R_{\geq0} \rightarrow \R_{\geq 0}$ is a class $\kinf$ function, i.e., $\alpha \in \kinf$ if $\alpha$ is strictly increasing, $\alpha(0)=0$, and $\lim_{t\rightarrow \infty} \alpha(t) = \infty$. \rz{$\|\cdot\|$ denotes the Euclidean norm.} $\1$ is a vector of appropriate dimension where each entry is equal to $1$.
\newpage

\section{Problem Setup}%
\label{section:PF}
%%%%%%%%%%%%%%%%%%%%%%%%%%%%%%%%%%%%%%%%%%%%%%%%%%%%%%%%%%%%%%%%%%%%%%%%%%%%%%%%%%%%%%%%%%%%%%%%%%%%%%%%%%%%%%
\paragraph{System description:}\label{subsection:PF_system_description} Consider the nonlinear, time-invariant system described, for each time $t\in\N$, as follows
\begin{align}
x_{t+1}=\fw(x_t,u_t),%
\label{eq:PF_true_dynamics}
\end{align}
where $x_t\in\R^n$ and $u_t\in\R^m$ denote the state and the input of the system at time $t$, respectively. The state $x_t$ is assumed to be fully available for measurement at each $t\in\N$. The system is subject to the following state and input constraints
\begin{align}
\label{eq:constraints}
(x_t,u_t)\in \X \times \U =: \ZZ,   
\end{align}
for each $t\in\N$, where $\X$ and $\U$ are known convex and compact sets that contain the origin in their interior. To simplify the notation, we set $z:=(x,u)$ \rs{when referring to a generic state-input pair}, and $z_t:=(x_t,u_t)$ \rs{for the state-input pair at time $t$}. The function $\fw:\R^n \times \R^m \to \R^n$ is unknown, but satisfies the following Assumption.
\begin{assumption}%
\label{ass:PF_system_assumption}
The function $\fw:\rz{\ZZ} \to \R^n$ is H\"older continuous in all of its arguments, i.e., there exist known constants $q\in\Rpp$ and $\lambda\in\R$, with $0<\lambda\leq 1$, such that
\begin{align*}
\|\fw(z_1)-\fw(z_2)\| \leq q \|z_1-z_2\|^\lambda,
\end{align*}
holds for all $z_1,z_2\in\ZZ$. Moreover, the origin is an equilibrium point for $\fw$, i.e., $\fw(0,0)=0$.
\end{assumption}
%
%\begin{remark}
Note that the case where $\lambda>1$ implies that the underlying function is constant on $\ZZ$ (see Proposition 1.1.16, \cite{fiorenza2017holder}). \rs{Estimating the H\"older constants $q$ and $\lambda$ from data can be done e.g., from first principles following the techniques outlined in Section 2.5 of \cite{calliess2014conservative} or in  \cite{huang2023sample} for the Lipschitz case}.
%\end{remark}
%
%%%%%%%%%%%%%%%%%%%%%%%%%%%%%%%%%%%%%%%%%%%%%%%%%%%%%%%%%%%%%%%%%%%%%%%%%%%%%%%%%%%%%%%%%%%%%%%%%%%%%%%%%%%%%%
\paragraph{Kinky inference: }%
\label{subsection:PF_kinky_inference}%
\emph{Kinky inference} is a learning technique that can be applied to H\"older continuous functions, \cite{calliess2014conservative}. We use kinky inference to construct a nominal model $\f_t: \rz{\ZZ} \to \R^n$ and an uncertainty function $\w_t: \rz{\ZZ} \to 2^{\R^n}$. 
For each $t\in\Np$, we define the \emph{data set} $\mathcal{D}_t \subset \R^{n+m}$ iteratively by $\mathcal{D}_t := \mathcal{D}_{t-1} \cup \left\{ z_{t-1} \right\}$, with $\mathcal{D}_0:= \left\{ 0 \right\}$. To construct the uncertainty functions $\w_t$, we define the confidence bounds $w_t^\text{max},w_t^\text{min}:\R^{n} \times \R^m \to \R^n$:
\begin{align}
\label{eq:PF_kinky_bounds_max} 
w_t^\text{max}(z) := \min_{y\in\D_t} \fw(y) + \1 q \|z-y\|^\lambda, \qquad w_t^\text{min}(z) := \max_{y\in\D_t} \fw(y) - \1 q \|z-y\|^\lambda. \end{align}
Since the system $f$ satisfies \cref{ass:PF_system_assumption}, it is possible to show that
\begin{align}
\label{eq:PF_true_system_bounds}
w_t^\text{min}(z) \leq \fw(z) \leq w_t^\text{max}(z), \quad \forall z\in \ZZ,
\end{align}
where the inequalities are meant element-wise. 
% \color{red}
\begin{assumption}
\label{ass:nominal_model}
At each time $t\in \N$, the nominal model $f_t$ is chosen as a H\"older continuous function satisfying the following conditions
\begin{subequations}
\begin{align}
w_t^{\min}(z) \leq \f_t(z) \leq  w_t^{\max}(z),~~~ &\forall z\in\ZZ,\label{eq:PF_nominal_model}\\
\| f_t(z)-f_t(y) \| \leq \alphaone(\| z-y \|),~~~ &\forall z,y\in \ZZ, ~~~ \forall t\in\N, \label{eq:PF_nominal_model_1}\\
\| f_{t}(z)-f_{t+1}(z) \| \leq \alphatwo( \| w^{\max}_{t}(z_{t})-w^{\min}_{t}(z_{t}) \|), ~~~ &\forall z\in\ZZ, ~\forall t\in \N.\label{eq:TA_model_updates_bounded_1} 
\end{align}
\end{subequations}
with $\alphaone, \alphatwo \in \mathcal{K}_{\infty}$.
\end{assumption}
Assumption \ref{ass:nominal_model} is satisfied if the nominal model $\f_t$ is chosen as the average between the two confidence bounds, as depicted in \cref{fig:PF_gp_figure} for the case of a one-dimensional system. Note that condition \cref{eq:TA_model_updates_bounded_1} implies that for any $z\in \ZZ$, the model update is bounded above by a function $\alphatwo$ that solely depends on the most recently visited state-input pair $z_t$, and not the considered state $z$. 
\begin{lemma}
    Condition \eqref{eq:TA_model_updates_bounded_1} can be equivalently re-stated as follows
\begin{align*}
    \| f_{t}(z)-f_{t+1}(z) \| \leq \alphatwo(\max_{y\in \ZZ} \| w_{t}^{\max}(y)-w_{t}^{\min}(y) - (w_{t+1}^{\max}(y)-w_{t+1}^{\min}(y)) \|), ~~~ &\forall z\in\ZZ, ~\forall t\in \N.
\end{align*}
\end{lemma}
\begin{figure}[t!]
\floatbox[{\capbeside\thisfloatsetup{capbesideposition={left,center},capbesidewidth=5cm}}]{figure}[\FBwidth]
{\caption{Kinky inference bounds for $\lambda=1$ (thin continuous lines) and mean function (dashed line, in red) compared against the true function (thick and continuous line, in blue) for a one-dimensional system.}\label{fig:PF_gp_figure}}
{% This file was created by matlab2tikz.
%
%The latest updates can be retrieved from
%  http://www.mathworks.com/matlabcentral/fileexchange/22022-matlab2tikz-matlab2tikz
%where you can also make suggestions and rate matlab2tikz.
%
\definecolor{mycolor1}{rgb}{0.00000,0.44700,0.74100}%
\definecolor{mycolor2}{rgb}{0.85000,0.32500,0.09800}%
\definecolor{mycolor3}{rgb}{0.92900,0.69400,0.12500}%
\definecolor{mycolor4}{rgb}{0.63500,0.07800,0.18400}%
\begin{tikzpicture}

\begin{axis}[%
width=0.55*5.877in,
height=0.25*4.635in,
at={(0.986in,0.626in)},
scale only axis,
xmin=0,
xmax=1,
xtick={  0, 0.2, 0.4, 0.6, 0.8,   1},
xlabel style={font=\color{white!15!black}},
xlabel={$x$},
ymin=-0.2,
ymax=1,
ylabel style={font=\color{white!15!black}},
ylabel={$f(x)$},
axis background/.style={fill=white},
axis x line*=bottom,
axis y line*=left,
xmajorgrids,
ymajorgrids,
legend style={at={(0.03,0.97)}, anchor=north west, legend columns=4, align=left, draw=white!15!black},
label style = {font=\small},
tick label style = {font=\small},
legend style = {font=\small}
]
\addplot [color=mycolor1, line width=1.5pt]
  table[row sep=crcr]{%
0	0\\
0.01	9e-05\\
0.02	0.00036\\
0.03	0.00081\\
0.04	0.00144\\
0.05	0.00225\\
0.06	0.00324\\
0.07	0.00441\\
0.08	0.00576\\
0.09	0.00729\\
0.1	0.009\\
0.11	0.01089\\
0.12	0.01296\\
0.13	0.01521\\
0.14	0.01764\\
0.15	0.02025\\
0.16	0.02304\\
0.17	0.02601\\
0.18	0.02916\\
0.19	0.03249\\
0.2	0.036\\
0.21	0.03969\\
0.22	0.04356\\
0.23	0.04761\\
0.24	0.05184\\
0.25	0.05625\\
0.26	0.06084\\
0.27	0.06561\\
0.28	0.07056\\
0.29	0.07569\\
0.3	0.081\\
0.31	0.08649\\
0.32	0.09216\\
0.33	0.09801\\
0.34	0.10404\\
0.35	0.11025\\
0.36	0.11664\\
0.37	0.12321\\
0.38	0.12996\\
0.39	0.13689\\
0.4	0.144\\
0.41	0.15129\\
0.42	0.15876\\
0.43	0.16641\\
0.44	0.17424\\
0.45	0.18225\\
0.46	0.19044\\
0.47	0.19881\\
0.48	0.20736\\
0.49	0.21609\\
0.5	0.225\\
0.51	0.23409\\
0.52	0.24336\\
0.53	0.25281\\
0.54	0.26244\\
0.55	0.27225\\
0.56	0.28224\\
0.57	0.29241\\
0.58	0.30276\\
0.59	0.31329\\
0.6	0.324\\
0.61	0.33489\\
0.62	0.34596\\
0.63	0.35721\\
0.64	0.36864\\
0.65	0.38025\\
0.66	0.39204\\
0.67	0.40401\\
0.68	0.41616\\
0.69	0.42849\\
0.7	0.441\\
0.71	0.45369\\
0.72	0.46656\\
0.73	0.47961\\
0.74	0.49284\\
0.75	0.50625\\
0.76	0.51984\\
0.77	0.53361\\
0.78	0.54756\\
0.79	0.56169\\
0.8	0.576\\
0.81	0.59049\\
0.82	0.60516\\
0.83	0.62001\\
0.84	0.63504\\
0.85	0.65025\\
0.86	0.66564\\
0.87	0.68121\\
0.88	0.69696\\
0.89	0.71289\\
0.9	0.729\\
0.91	0.74529\\
0.92	0.76176\\
0.93	0.77841\\
0.94	0.79524\\
0.95	0.81225\\
0.96	0.82944\\
0.97	0.84681\\
0.98	0.86436\\
0.99	0.88209\\
1	0.9\\
};
\addlegendentry{$f$}

\addplot [color=mycolor2, line width=0.7pt]
  table[row sep=crcr]{%
0	-4.29407774050453e-12\\
0.01	-0.017378414444365\\
0.02	-0.0349940643867516\\
0.03	-0.0149940643867516\\
0.04	-0.00238589459597314\\
0.05	-0.0223858945959731\\
0.06	-0.0423858945959731\\
0.07	-0.0623858945959732\\
0.08	-0.0823858945959731\\
0.09	-0.102385894595973\\
0.1	-0.122385894595973\\
0.11	-0.142385894595973\\
0.12	-0.133630221741604\\
0.13	-0.113630221741604\\
0.14	-0.0936302217416035\\
0.15	-0.0736302217416036\\
0.16	-0.0536302217416036\\
0.17	-0.0336302217416036\\
0.18	-0.0136302217416036\\
0.19	0.00636977825839643\\
0.2	0.0263697782583964\\
0.21	0.0299343066369928\\
0.22	0.00993430663699282\\
0.23	-0.0100656933630072\\
0.24	-0.0300656933630072\\
0.25	-0.0500656933630072\\
0.26	-0.0700656933630072\\
0.27	-0.0900656933630072\\
0.28	-0.110065693363007\\
0.29	-0.130065693363007\\
0.3	-0.150065693363007\\
0.31	-0.130702667905351\\
0.32	-0.110702667905351\\
0.33	-0.0907026679053511\\
0.34	-0.0707026679053511\\
0.35	-0.0507026679053511\\
0.36	-0.0307026679053512\\
0.37	-0.0107026679053512\\
0.38	0.00929733209464884\\
0.39	0.0292973320946489\\
0.4	0.0492973320946489\\
0.41	0.0692973320946489\\
0.42	0.0892973320946488\\
0.43	0.109297332094649\\
0.44	0.129297332094649\\
0.45	0.149297332094649\\
0.46	0.169297332094649\\
0.47	0.189297332094649\\
0.48	0.202484932094649\\
0.49	0.182484932094649\\
0.5	0.162484932094649\\
0.51	0.142484932094649\\
0.52	0.122484932094649\\
0.53	0.102484932094649\\
0.54	0.1002969\\
0.55	0.1202969\\
0.56	0.1402969\\
0.57	0.1602969\\
0.58	0.1802969\\
0.59	0.2002969\\
0.6	0.2202969\\
0.61	0.2402969\\
0.62	0.2602969\\
0.63	0.2802969\\
0.64	0.3002969\\
0.65	0.3202969\\
0.66	0.3402969\\
0.67	0.3602969\\
0.68	0.3802969\\
0.69	0.4002969\\
0.7	0.4202969\\
0.71	0.4402969\\
0.72	0.4602969\\
0.73	0.4762969\\
0.74	0.4562969\\
0.75	0.4362969\\
0.76	0.449\\
0.77	0.469\\
0.78	0.489\\
0.79	0.509\\
0.8	0.529\\
0.81	0.549\\
0.82	0.569\\
0.83	0.589\\
0.84	0.609\\
0.85	0.629\\
0.86	0.649\\
0.87	0.669\\
0.88	0.689\\
0.89	0.709\\
0.9	0.729\\
0.91	0.72\\
0.92	0.74\\
0.93	0.76\\
0.94	0.78\\
0.95	0.8\\
0.96	0.82\\
0.97	0.84\\
0.98	0.86\\
0.99	0.88\\
1	0.9\\
};
\addlegendentry{$w_t^\text{min}$}

\addplot [color=mycolor3, line width=0.7pt]
  table[row sep=crcr]{%
0	4.29407774051283e-12\\
0.01	0.0173815035210845\\
0.02	0.0373815035210845\\
0.03	0.0176141054040269\\
0.04	0.00500593561324838\\
0.05	0.0250059356132484\\
0.06	0.0450059356132484\\
0.07	0.0650059356132484\\
0.08	0.0850059356132484\\
0.09	0.105005935613248\\
0.1	0.125005935613248\\
0.11	0.145005935613248\\
0.12	0.165005935613248\\
0.13	0.185005935613248\\
0.14	0.169934306636993\\
0.15	0.149934306636993\\
0.16	0.129934306636993\\
0.17	0.109934306636993\\
0.18	0.0899343066369928\\
0.19	0.0699343066369928\\
0.2	0.0499343066369928\\
0.21	0.0463697782583964\\
0.22	0.0663697782583964\\
0.23	0.0863697782583965\\
0.24	0.106369778258396\\
0.25	0.126369778258396\\
0.26	0.146369778258396\\
0.27	0.166369778258396\\
0.28	0.186369778258396\\
0.29	0.206369778258396\\
0.3	0.226369778258396\\
0.31	0.246369778258396\\
0.32	0.266369778258396\\
0.33	0.286369778258396\\
0.34	0.306369778258396\\
0.35	0.326369778258396\\
0.36	0.346369778258396\\
0.37	0.366369778258396\\
0.38	0.386369778258396\\
0.39	0.382484932094649\\
0.4	0.362484932094649\\
0.41	0.342484932094649\\
0.42	0.322484932094649\\
0.43	0.302484932094649\\
0.44	0.282484932094649\\
0.45	0.262484932094649\\
0.46	0.242484932094649\\
0.47	0.222484932094649\\
0.48	0.209297332094649\\
0.49	0.229297332094649\\
0.5	0.249297332094649\\
0.51	0.269297332094649\\
0.52	0.289297332094649\\
0.53	0.309297332094649\\
0.54	0.329297332094649\\
0.55	0.349297332094649\\
0.56	0.369297332094649\\
0.57	0.389297332094649\\
0.58	0.409297332094649\\
0.59	0.429297332094649\\
0.6	0.449297332094649\\
0.61	0.469297332094649\\
0.62	0.489297332094649\\
0.63	0.509297332094649\\
0.64	0.529297332094649\\
0.65	0.549297332094649\\
0.66	0.569297332094649\\
0.67	0.589297332094649\\
0.68	0.5762969\\
0.69	0.5562969\\
0.7	0.5362969\\
0.71	0.5162969\\
0.72	0.4962969\\
0.73	0.4802969\\
0.74	0.5002969\\
0.75	0.5202969\\
0.76	0.5402969\\
0.77	0.5602969\\
0.78	0.5802969\\
0.79	0.6002969\\
0.8	0.6202969\\
0.81	0.6402969\\
0.82	0.6602969\\
0.83	0.6802969\\
0.84	0.7002969\\
0.85	0.7202969\\
0.86	0.7402969\\
0.87	0.7602969\\
0.88	0.769\\
0.89	0.749\\
0.9	0.729\\
0.91	0.749\\
0.92	0.769\\
0.93	0.789\\
0.94	0.809\\
0.95	0.829\\
0.96	0.849\\
0.97	0.869\\
0.98	0.889\\
0.99	0.909\\
1	0.9\\
};
\addlegendentry{$w_t^\text{max}$}

\addplot [color=mycolor4, dashed, line width=1.5pt]
  table[row sep=crcr]{%
0	4.14882775983628e-24\\
0.01	1.54453835974483e-06\\
0.02	0.00119371956716644\\
0.03	0.00131002050863762\\
0.04	0.00131002050863762\\
0.05	0.00131002050863762\\
0.06	0.00131002050863762\\
0.07	0.00131002050863762\\
0.08	0.00131002050863763\\
0.09	0.00131002050863763\\
0.1	0.00131002050863762\\
0.11	0.00131002050863763\\
0.12	0.0156878569358224\\
0.13	0.0356878569358224\\
0.14	0.0381520424476946\\
0.15	0.0381520424476946\\
0.16	0.0381520424476946\\
0.17	0.0381520424476946\\
0.18	0.0381520424476946\\
0.19	0.0381520424476946\\
0.2	0.0381520424476946\\
0.21	0.0381520424476946\\
0.22	0.0381520424476946\\
0.23	0.0381520424476946\\
0.24	0.0381520424476946\\
0.25	0.0381520424476946\\
0.26	0.0381520424476946\\
0.27	0.0381520424476946\\
0.28	0.0381520424476946\\
0.29	0.0381520424476946\\
0.3	0.0381520424476946\\
0.31	0.0578335551765226\\
0.32	0.0778335551765226\\
0.33	0.0978335551765226\\
0.34	0.117833555176523\\
0.35	0.137833555176523\\
0.36	0.157833555176523\\
0.37	0.177833555176523\\
0.38	0.197833555176523\\
0.39	0.205891132094649\\
0.4	0.205891132094649\\
0.41	0.205891132094649\\
0.42	0.205891132094649\\
0.43	0.205891132094649\\
0.44	0.205891132094649\\
0.45	0.205891132094649\\
0.46	0.205891132094649\\
0.47	0.205891132094649\\
0.48	0.205891132094649\\
0.49	0.205891132094649\\
0.5	0.205891132094649\\
0.51	0.205891132094649\\
0.52	0.205891132094649\\
0.53	0.205891132094649\\
0.54	0.214797116047324\\
0.55	0.234797116047324\\
0.56	0.254797116047324\\
0.57	0.274797116047324\\
0.58	0.294797116047325\\
0.59	0.314797116047324\\
0.6	0.334797116047324\\
0.61	0.354797116047324\\
0.62	0.374797116047324\\
0.63	0.394797116047324\\
0.64	0.414797116047324\\
0.65	0.434797116047324\\
0.66	0.454797116047324\\
0.67	0.474797116047324\\
0.68	0.4782969\\
0.69	0.4782969\\
0.7	0.4782969\\
0.71	0.4782969\\
0.72	0.4782969\\
0.73	0.4782969\\
0.74	0.4782969\\
0.75	0.4782969\\
0.76	0.49464845\\
0.77	0.51464845\\
0.78	0.53464845\\
0.79	0.55464845\\
0.8	0.57464845\\
0.81	0.59464845\\
0.82	0.61464845\\
0.83	0.63464845\\
0.84	0.65464845\\
0.85	0.67464845\\
0.86	0.69464845\\
0.87	0.71464845\\
0.88	0.729\\
0.89	0.729\\
0.9	0.729\\
0.91	0.7345\\
0.92	0.7545\\
0.93	0.7745\\
0.94	0.7945\\
0.95	0.8145\\
0.96	0.8345\\
0.97	0.8545\\
0.98	0.8745\\
0.99	0.8945\\
1	0.9\\
};
\addlegendentry{$f_t$}

\addplot[only marks, mark=*, mark options={}, mark size=1.5000pt, draw=mycolor1, fill=mycolor1] table[row sep=crcr]{%
x	y\\
1	0.9\\
0.9	0.729\\
0.729	0.4782969\\
0.4782969	0.205891132094649\\
0.205891132094649	0.0381520424476946\\
0.0381520424476946	0.00131002050863762\\
0.00131002050863762	1.54453835974606e-06\\
1.54453835974606e-06	2.14703887025434e-12\\
2.14703887025434e-12	4.14879831934474e-24\\
};
%\addlegendentry{Data points}

\end{axis}
\end{tikzpicture}%}
\end{figure}
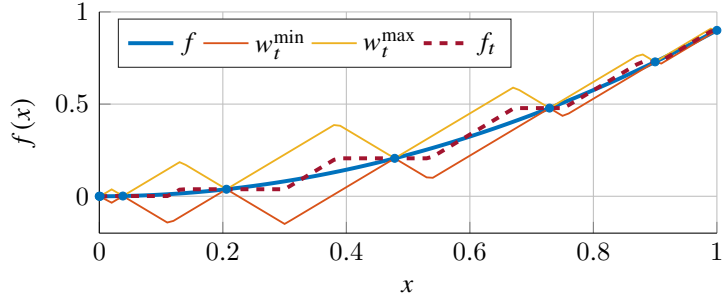
\begin{proof}
    In the following, we focus on the more elaborated case where $w_{t}^{\max}(z)\neq w_{t+1}^{\max}(z)$ and $w_{t}^{\min}(z)\neq w_{t+1}^{\min}(z)$. The alternative case can be trivially shown by following similar steps.
\begin{align*}
& \, \| w_{t}^{\max}(z)-w_{t}^{\min}(z) - (w_{t+1}^{\max}(z)-w_{t+1}^{\min}(z))\| \\ \stackrel{\eqref{eq:PF_kinky_bounds_max} }{=} & \,
     \| \min_{y\in\D_{t}} [\fw(y) + \1 q \|z-y\|^\lambda] - \max_{y\in\D_{t}} [\fw(y) - \1 q \|z-y\|^\lambda]\\
     & \, -\min \{w_{t}^{\max}(z), f(z_{t}) + \1 q \|z-z_{t}\|^\lambda \} + \max \{w_{t}^{\min}(z), f(z_{t}) - \1 q \|z-z_{t}\|^\lambda \} \| \\
     \stackrel{(*)}{\leq} & \, \|  \min_{y\in\D_{t}} [\fw(y) + \1 q \|z_t-y\|^\lambda+ \1 q \|z-z_t\|^\lambda] - \max_{y\in\D_{t}} [\fw(y) - \1 q \|z_t-y\|^\lambda -  \1 q \|z-z_t\|^\lambda] \\
     & \, -  f(z_{t}) - \1 q \|z-z_{t}\|^\lambda  + f(z_{t}) - \1 q \|z-z_{t}\|^\lambda \| \\
     =  & \,  \| \min_{y\in\D_{t}} [\fw(y) + \1 q \|z_t-y\|^\lambda]- \max_{y\in\D_{t}} [\fw(y) - \1 q \|z_t-y\|^\lambda ] + 2 \cdot\1 q \|z-z_t\|^\lambda - 2\cdot \1 q \|z-z_{t}\|^\lambda \| \\
     \stackrel{\eqref{eq:PF_kinky_bounds_max} }{=} & \, \| w_{t}^{\max}(z_t)-w_{t}^{\min}(z_t)\|.
\end{align*}
where in ($*$) we use the subadditivity of the function $\|\cdot\|^\lambda$ and that $w_{t}^{\max}(z)\neq w_{t+1}^{\max}(z)$ and $w_{t}^{\min}(z)\neq w_{t+1}^{\min}(z)$.
Since the bound above holds for any $z\in \ZZ$, then we have that
\begin{align*}
    \max_{y\in \ZZ} \| w_{t}^{\max}(y)-w_{t}^{\min}(y) - (w_{t+1}^{\max}(y)-w_{t+1}^{\min}(y)) \|) = \| w_{t}^{\max}(z_t)-w_{t}^{\min}(z_t)\|,
\end{align*}
which concludes the proof.
\end{proof}
\color{black}

Based on the confidence bounds, the uncertainty function $\w_t$ is chosen as follows
\begin{align}
\w_t(z) &:= \left\{ w\in\R^n : w_t^\text{min}(z) \leq w + \f_t(z) \leq w_t^\text{max}(z) \right\}.
\label{eq:PF_kinky_uncertainty_2}
\end{align}
In \cite{calliess2014conservative}, it is shown that
\begin{subequations}\label{eq:PF_max_uncertainty_bounded}
\begin{align}
\f_{t+1}(x,u)\oplus \w_{t+1}(x,u)\subseteq \f_t(x,u)\oplus \w_t(x,u), ~\forall (x, u)\in \ZZ, \label{eq:PF_max_uncertainty_bounded_2}\\
\w_t(z)=\left\{ 0 \right\}, ~ \forall z\in\D_t, ~  t\in\N, \label{eq:PF_max_uncertainty_bounded_0} \\
\fw(x,u)-\f_t(x,u)\in\w_t(x,u), ~ \forall (x, u)\in \ZZ, t\in \N. \label{eq:PF_max_uncertainty_bounded_1}
\end{align}
\end{subequations}
Note in particular that \cref{eq:PF_max_uncertainty_bounded_0} implies that the uncertainty is zero at all data-points that have been observed in the past.
%

%
%%%%%%%%%%%%%%%%%%%%%%%%%%%%%%%%%%%%%%%%%%%%%%%%%%%%%%%%%%%%%%%%%%%%%%%%%%%%%%%%%%%%%%%%%%%%%%%%%%%%%%%%%%%%%%
\paragraph*{Model Predictive Control scheme: }%
\label{subsection:PF_MPC}
Our goal is to steer system \cref{eq:PF_true_dynamics} from some initial condition $x_0\in\mathscr{X}$ to the origin by choosing an appropriate input sequence $u_t$, for each $t\in\N$. To achieve this, we formulate a model predictive control scheme that employs the nominal dynamics $\f_t$ to predict the future evolution of the system, and leverages the uncertainty bounds $\w_t$ to guarantee robust constraint satisfaction at each time-step. 

Consider the following finite horizon \emph{cost function} $V:\R^{n\times N} \times \R^{m\times (N-1)}\to\Rp$ defined as
\begin{align}
\label{eq:PF_cost}
V(x_{\cdot|t},u_{\cdot|t}) = V_f(x_{N|t}) + \sum_{k=0}^{N-1} \l(x_{k|t},u_{k|t}),
\end{align}
where $\l:\R^n \times \R^m \to \Rp$ is  the \emph{stage cost}, while $V_f:\R^n \to \Rp$ is the \emph{terminal cost}, and $N\in\Np$ is the prediction horizon. The stage cost $\ell$ is chosen to satisfy the following.
\begin{assumption}%
\label{ass:PF_cost_assumptions}
The stage cost $\l$ is continuous, and satisfies
\begin{align*}
\alphaeleven(\|x\|) \leq \inf_{u\in\U} \l(x,u) \leq \alphatwelve(\|x\|),
\end{align*}
for some $\alphaeleven,\alphatwelve\in\kinf$ and for all $x\in\R^n$.
\end{assumption}

\cref{ass:PF_cost_assumptions} is satisfied if $\l(x,u):=x^{\top}Qx+u^{\top}Ru$, where $Q,R \succ 0$ are matrices of appropriate dimension, \cite{soloperto2022nonlinear}. 

\begin{assumption}%
\label{ass:PF_terminal_cost}
There exist a terminal cost $V_f:\mathbb{R}^n\rightarrow \Rp$, terminal controller $\kappa_f:\R^n\to\R^m$, terminal region $\XX_f\subseteq \mathcal{X}$, and functions $\alphathree, \alphafour\in \mathcal{K}_{\infty}$, such that, for all $x\in\XX_f$ and for all $w \in \w_0(x, \kappa_f(x))$, it holds that
\begin{subequations}
\label{eq:terminal_conditions}
\begin{align}
\label{eq:terminal_region}
\f_0(x,\kappa_f(x))+w \subseteq \XX_f, ~~ (x,\kappa_f(x))\in\ZZ, \\
\alphathree(\|x\|) \leq V_f(x) \leq \alphafour(\|x\|), \\
V_f(\f_0(x,\kappa_f(x))+w)-V_f(x) \leq -\l (x,\kappa_f(x)),\label{eq:terminal_cost_decrease}
\end{align}
\end{subequations}
\end{assumption}
Assumption \ref{ass:PF_terminal_cost} is satisfied if the origin is exponentially stable with a common Lyapunov function, \cite{chen1998quasi}.

\begin{assumption}
\label{ass:tubes}
    Given a nominal model $f_t$, an uncertainty function $\w_t$, and a state and input pair $(x,u)\in \ZZ$, it is possible to construct tubes $\XX(x,u,t)\subseteq \X$ such that
    \begin{align*}
    \f_t(x,u) \oplus \w_t(x,u) \in\XX(x,u,t), \quad \forall t\in \N, \\
    f_{t+1}(x,u)\oplus \w_{t+1} \subseteq f_t(x,u)\oplus \w_{t}\Rightarrow \XX(x,u,t+1) \subseteq \XX(x,u,t).
    \end{align*}
    \end{assumption}
Assumption \ref{ass:tubes} is common in robust MPC approaches that only consider the initial knowledge of the system, \cite{kohler2020computationally}. In our case, satisfying it becomes ``easier'' over time, due to the monotonicity in uncertainty implied by \cref{eq:PF_max_uncertainty_bounded_2}.

Given a nominal model $f_t$, a description of the uncertainty $\w_t$, a cost function $V$, and an appropriately designed terminal set $\mathcal{X}_f$, we consider the following finite-horizon optimal control problem
\begin{subequations}
\label{eq:PF_MPC}
\renewcommand{\arraystretch}{1.25}
\begin{align}%
& \underset{\XX_{\cdot|t},x_{\cdot|t},u_{\cdot|t}}{\text{minimize}} \quad V(x_{\cdot|t},u_{\cdot|t}) & \notag \\
& \begin{array}[c]{@{}l@{\hspace{\len}}l}
\text{subject to} %
& x_{k+1|t} = \f_t(x_{k|t},u_{k|t}), \\
& x_{0|t} = x_t,
\end{array}&
%-----------------------------------------------------------
% Begin parenthesis ------------------------------------------
\left.\vphantom{%
\begin{array}[c]{@{}l@{\hspace{\len}}l}
\text{subject to} %
& x_{k+1|t} = \f_t(x_{k|t},u_{k|t}), \\
& x_{0|t} = x_t,
\end{array}}%
\right\}%
\label{eq:PF_MPC_1}\\
% End parenthesis ------------------------------------------
%-----------------------------------------------------------
& \begin{array}[c]{@{}l@{\hspace{\len}}l}
\phantom{\text{subject to}} %
& \f_t(\bar{x}_{k|t},u_{k|t})+w_{k|t}\in\XX_{k+1|t}, \\
& (\bar{x}_{k|t},u_{k|t}) \in \ZZ, \\
& \forall w_{k|t}\in\w_t(\bar{x}_{k|t},u_{k|t}) , \\
& \forall \bar{x}_{k|t}\in\XX_{k|t}, \\
& k=0,\dots,N-1,
\end{array}&
%-----------------------------------------------------------
% Begin parenthesis ------------------------------------------
\left.\vphantom{%
\begin{array}[c]{@{}l@{\hspace{\len}}l}
\phantom{\text{subject to}} %
& \f_t(x_{k|t},u_{k|t})+w_{k|t}\in\XX_{k+1|t}, \\
& (x_{k|t},u_{k|t}) \in \ZZ, \\
& \forall w_{k|t}\in\w_t(x_{k|t},u_{k|t}) , \\
& \forall x_{k|t}\in\XX_{k|t}, \\
& k=0,\dots,N-1,
\end{array}}%
\right\}%
\label{eq:PF_MPC_2}\\
% End parenthesis ------------------------------------------
%-----------------------------------------------------------
& \phantom{\text{subject to}} \quad x_t \in \XX_{0|t}, \quad
 \XX_{N|t} \subseteq \XX_f, \label{eq:PF_MPC_3}
%-----------------------------------------------------------
\end{align}
\end{subequations}
%
%
%\begin{figure}[b]
%\centering
%\input{Figures/gp.tex}
%\caption{Kinky inference bounds (thin continuous lines) and mean function (dashed line, in red) compared againts the true function (thick and continuous, in blue) for a one-dimensional system.}
%\label{fig:PF_gp_figure}
%\end{figure}
%
%\begin{assumption}
%\label{ass:tubes}
%    Given a nominal model $f_t$, an uncertainty function $\w_t$, and a state and input pair $(x,u)\in \ZZ$, it is possible to construct a tube $\XX_{k}\subseteq \XX$ such that
    %\begin{align*}
    %\f_t(x,u) \oplus \w_t(x,u) \in\XX(x, u, t), \quad \forall t\in \N, \\
    %f_{t+1}(x,u)\oplus \w_{t+1} \subseteq f_t(x,u)\oplus \w_{t}\Rightarrow \XX(x, u, t+1) \subseteq \XX(x, u, t).
    %\end{align*}
    %\end{assumption}
%
where $\XX_{\cdot|t}$ are appropriately constructed tubes that satisfy Assumption \ref{ass:tubes}. The optimization problem \cref{eq:PF_MPC} is a general description of a robust MPC scheme, and is not implementable without a proper construction of the sets $\XX_{\cdot|t}$. In \cite{soloperto2019collision}, it is shown how \cref{eq:PF_MPC} can describe several approaches, including MPC schemes for linear systems subject to bounded additive disturbance, \cite{chisci2001systems} and \cite{mayne2005robust}, and nonlinear system subject to parametric uncertainties, \cite{kohler2021robust}.

We denote the optimizers of the Problem \cref{eq:PF_MPC} by $(\XX^*_{\cdot|t},x_{\cdot|t}^*,u_{\cdot|t}^*) = (\XX^*_{\cdot|t},z_{\cdot|t}^*)$, and the optimal value by
\begin{align*}
V_t^*(x_t) := V_f(x_{N|t}^*) + \sum_{k=0}^{N-1} \l(x_{k|t}^*,u_{k|t}^*).
\end{align*}

The equality constraints \cref{eq:PF_MPC_1} produce a nominal trajectory $x_{\cdot|t}$, obtained by propagating the initial condition $x_t$ through the nominal model $\f_t$ with the input $u_{\cdot|t}$. The constraints in \cref{eq:PF_MPC_2} are designed to ensure that the closed-loop trajectory of the system satisfies the constraints $(x_t,u_t)\in\ZZ$. This is achieved thanks to the introduction of the tubes $\XX_{\cdot|t}\subset\R^n$, which are guaranteed to contain the true state of the system at any time step if the inputs $u_{\cdot|t}$ were to be applied.

The closed-loop system can be obtained by combining the optimal control input $u_{0|t}^*$ with the dynamics \cref{eq:PF_true_dynamics} using the receding-horizon paradigm:
\begin{align}%
\label{eq:PF_closed_loop}
x_{t+1}=f(x_t,u_t),~~ u_t=u_{0|t}^*.
\end{align}

% In \cref{subsection:APP_terminal_controller} we prove that \cref{eq:terminal_cost_decrease} is satisfied if the linear time-invariant system obtained by linearizing \cref{eq:PF_true_dynamics} around the origin is stabilizable, $\fw$ is locally Lipschitz around the origin, if $h_t(z)$ is sufficiently small for all $z$ in a neighborhood of the origin, and $f_t$ is continuously differentiable around the origin for all $t\in\N$.
\section{Theoretical analysis}%
\label{section:TA}
In this section, we show that the closed-loop system converges to the origin despite the presence of model uncertainty.
Let
\begin{align}
    h_t(z) &:= \| w^\text{max}_t(z)-w^\text{min}_t(z) \|. 
\end{align}
The function $h_t$ can be interpreted as the level of uncertainty about the system $\fw(z)$ at a given time $t$, where a small value of $h_t(z)$ implies accurate knowledge of $\fw(z)$. Based on \eqref{eq:PF_kinky_bounds_max} and \eqref{eq:PF_max_uncertainty_bounded}, we have that $h_t(z)=0$ for all the previously visited points $z\in\mathcal{D}_t$.

%The proof of \hyperref[lemma:PF_kinky_satisfies_model_assumptions]{Lemma \ref*{lemma:PF_kinky_satisfies_model_assumptions}} is shown in \cref{subsection:APP_kinky_satisfies_model_assumptions}.

We define the \emph{uncertainty size} $C_t\in\Rp$ across the entire space $\ZZ$ as
\begin{align}%
\label{eq:PF_uncertainty_size}
C_t:=\int_{z\in\ZZ} h_t(z)dz.
\end{align}%
According to \eqref{eq:PF_max_uncertainty_bounded}, it is easy to verify that $0 \leq C_{t+1}\leq C_t < \infty$ for all $t\in\N$. The upper-bound is ensured since $\ZZ$ is compact and $h_0(z) < \infty$ for all $z\in\ZZ$.

\begin{lemma}%
\label{lemma:PF_uncertainty_size_2}
Under \cref{ass:PF_system_assumption}, there exists a function $\alphafive\in\kinf$ such that $C_{t+1}-C_t \leq -\alphafive(h_t(x_t,u_t))$ for all $t\in\N$.
\end{lemma}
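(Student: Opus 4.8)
The plan is to exploit that inserting the new sample $z_t:=(x_t,u_t)$ into the data set makes the uncertainty at $z_t$ collapse to zero, i.e.\ $h_{t+1}(z_t)=0$, and to combine this with a H\"older bound on $z\mapsto h_s(z)$ that is uniform in $s$. Together these force $h_t-h_{t+1}$ to stay at least $\tfrac12 h_t(z_t)$ on an entire neighbourhood of $z_t$, and integrating this local gap over a small ball intersected with $\ZZ$ — whose volume is lower-bounded using convexity of $\ZZ$ — gives a $\kinf$ lower bound on $C_t-C_{t+1}$.

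The first ingredient is regularity: there is a constant $L>0$ depending only on $n$, $q$ and $\lambda$ such that $|h_s(z)-h_s(y)|\le L\|z-y\|^\lambda$ for all $z,y\in\ZZ$ and all $s\in\N$. This holds because each component of $w_s^\text{max}$ and $w_s^\text{min}$ is a pointwise infimum (resp.\ supremum) of the maps $z\mapsto\fw(\bar y)\pm q\|z-\bar y\|^\lambda$, which all share the modulus of continuity $r\mapsto q r^\lambda$ (using subadditivity of $r\mapsto r^\lambda$ for $0<\lambda\le1$), and such a modulus is inherited by infima/suprema; hence $h_s=\|w_s^\text{max}-w_s^\text{min}\|$ is H\"older with $L=2\sqrt n\,q$. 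The second ingredient is that $\D_{t+1}=\D_t\cup\{z_t\}$: taking $y=z_t$ in \eqref{eq:PF_kinky_bounds_max} gives $w_{t+1}^\text{max}(z_t)\le\fw(z_t)\le w_{t+1}^\text{min}(z_t)\le w_{t+1}^\text{max}(z_t)$, so $h_{t+1}(z_t)=0$ (equivalently, this is \eqref{eq:PF_max_uncertainty_bounded_0}).

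The third, geometric ingredient is that $\ZZ=\X\times\U$, being convex, compact, and containing the origin in its interior, satisfies an interior cone condition: there are constants $\theta>0$, $\rho_0>0$ depending only on $\ZZ$ with $\vol(\ZZ\cap B(z,\rho))\ge\theta\rho^{n+m}$ for all $z\in\ZZ$ and all $\rho\in(0,\rho_0]$ (the convex hull of $z$ with a fixed ball contained in the interior of $\ZZ$ lies in $\ZZ$ and subtends at $z$ a solid angle bounded away from zero). Putting $\delta:=h_t(x_t,u_t)$ and $\rho:=\min\{\rho_0,(\delta/(4L))^{1/\lambda}\}$, so that $2L\rho^\lambda\le\delta/2$, the two estimates above yield, for every $z\in\ZZ\cap B(z_t,\rho)$, both $h_t(z)\ge\delta-L\rho^\lambda$ and $h_{t+1}(z)\le L\rho^\lambda$, hence $h_t(z)-h_{t+1}(z)\ge\delta/2$.

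To conclude, since $h_{t+1}\le h_t$ pointwise on $\ZZ$ by \eqref{eq:PF_max_uncertainty_bounded_2}, I would restrict the integral defining $C_{t+1}-C_t$ to $\ZZ\cap B(z_t,\rho)$:
\begin{align*}
C_{t+1}-C_t=\int_{\ZZ}\bigl(h_{t+1}(z)-h_t(z)\bigr)\,dz\ \le\ -\int_{\ZZ\cap B(z_t,\rho)}\bigl(h_t(z)-h_{t+1}(z)\bigr)\,dz\ \le\ -\tfrac{\theta}{2}\,\delta\,\rho^{n+m},
\end{align*}
i.e.\ $C_{t+1}-C_t\le-\alphafive(\delta)$ with $\alphafive(s):=\tfrac{\theta}{2}\,s\,\min\{\rho_0,(s/(4L))^{1/\lambda}\}^{n+m}=\tfrac{\theta}{2}\min\{\rho_0^{\,n+m}s,\,(4L)^{-(n+m)/\lambda}s^{1+(n+m)/\lambda}\}$, a minimum of two continuous, strictly increasing, unbounded functions vanishing at the origin, hence $\alphafive\in\kinf$. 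The step I expect to be the real obstacle is the geometric one: getting the volume bound $\vol(\ZZ\cap B(z,\rho))\ge\theta\rho^{n+m}$ uniformly over the center $z\in\ZZ$, in particular for $z$ on the boundary of $\ZZ$ where $\ZZ$ might a priori be very thin — this is exactly where convexity of $\ZZ$ and the interior-point hypothesis are indispensable; the uniform-in-$s$ H\"older bookkeeping and the vanishing $h_{t+1}(z_t)=0$ are comparatively routine.
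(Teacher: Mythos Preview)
Your argument is correct and self-contained. The paper proceeds differently: it notes that $h_t-h_{t+1}$ is uniformly continuous on the compact set $\ZZ$ and then invokes an external result (Lemma~3 of \cite{soloperto2022guaranteed}) asserting that any nonnegative uniformly continuous function $g$ on such a set satisfies $g(\bar z)\le\deltatwo\bigl(\int_{\ZZ}g\bigr)$ for some $\deltatwo\in\kinf$; evaluating at $\bar z=z_t$, using $h_{t+1}(z_t)=0$, and inverting $\deltatwo$ gives $\alphafive=\deltatwo^{-1}$. Your route replaces this black box by an explicit local estimate built from the uniform-in-$t$ H\"older modulus of $h_s$ together with an interior-cone volume bound for $\ZZ\cap B(z_t,\rho)$. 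What you gain is a concrete formula for $\alphafive$ and, more importantly, you make transparent that the modulus of continuity --- and hence $\alphafive$ --- is independent of $t$; the paper's phrasing (``continuous on a compact, hence uniformly continuous'') would a priori only yield a $t$-dependent modulus and leaves this uniformity implicit. The paper's version is terser once the cited lemma is granted and does not explicitly rely on convexity of $\ZZ$, whereas your cone argument does --- and you correctly flag that geometric step as the real crux.
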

\begin{proof}
We begin by applying Lemma 3 of \cite{soloperto2022guaranteed} to the function $h_t-h_{t+1}$. To this end, consider that $h_t-h_{t+1}$ is continuous in $z$ (since both $h_t$ and $h_{t+1}$ are) and, in particular, it is uniformly continuous in $z$ when restricted to $\ZZ$. As a result, there exists some $\deltaone\in\kinf$ such that
\begin{equation*}
|h_t(z)-h_{t+1}(z) - [h_t(y)-h_{t+1}(y)] | \leq \deltaone(\|z-y\|) ,~~~ \forall z,y\in\ZZ,
\end{equation*}
and we can therefore apply Lemma 3 of \cite{soloperto2022guaranteed}, which yields
\begin{align*}
h_t(\bar{z})-h_{t+1}(\bar{z}) \leq \deltatwo \left( \int_{z\in\ZZ} [h_t(z)-h_{t+1}(z)]dz \right),
\end{align*}
for some $\deltatwo\in\kinf$ and for all $\bar{z}\in\ZZ$. The inequality holds if we apply the function $\deltatwo^{-1}\in\kinf$ on both sides of the inequality (recall that the inverse of a $\kinf$ function exists and is itself $\kinf$, as stated in page 4, \cite{kellett2014compendium}), obtaining for all $\bar{z}\in\ZZ$
\begin{align*}
\deltatwo^{-1} (h_t(\bar{z})-h_{t+1}(\bar{z})) \leq \int_{z\in\ZZ} [h_t(z)-h_{t+1}(z)]dz.
\end{align*}
Choosing $\bar{z}=z_t$, we have $h_{t+1}(z_t)=0$ from \cref{eq:PF_max_uncertainty_bounded_0}, and therefore
\begin{align*}
C_{t+1}-C_t = \int_{z\in\ZZ} h_{t+1}(z) dz - \int_{z\in\ZZ} h_{t}(z) dz \leq -\deltatwo^{-1}(h_t(z_t)) =:  -\alphafive(h_t(z_t)).
\end{align*}
\end{proof}
%The proof of \hyperref[lemma:PF_uncertainty_size_2]{Lemma \ref*{lemma:PF_uncertainty_size_2}} is shown in \cref{section:APP_proof_lemma_4}.
%
\begin{theorem}
\label{theorem:TA_stability}
Let Assumptions \ref{ass:PF_system_assumption}, \ref{ass:PF_cost_assumptions}, \ref{ass:PF_terminal_cost}, and \ref{ass:tubes} hold, and assume that the MPC problem \cref{eq:PF_MPC} is feasible at time $t=0$. Then, the MPC scheme \cref{eq:PF_MPC} is feasible for all time $t\in\N_+$, and the closed-loop system \cref{eq:PF_closed_loop} asymptotically converges to the origin while satisfying the state and input constraints \eqref{eq:constraints}.
\end{theorem}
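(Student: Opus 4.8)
The plan is to follow the standard robust-MPC recipe — recursive feasibility via a candidate shifted input, then a Lyapunov argument on the optimal value function — but with the extra twist that the model $f_t$ and the tubes $\XX_{\cdot|t}$ change with $t$. The key observation making this work is the monotonicity built into the setup: \cref{eq:PF_max_uncertainty_bounded_2} gives $f_{t+1}(z)\oplus\w_{t+1}(z)\subseteq f_t(z)\oplus\w_t(z)$, and \cref{ass:tubes} then guarantees $\XX(z,t+1)\subseteq\XX(z,t)$, so a tube that was feasible at time $t$ remains valid at time $t+1$. First I would establish recursive feasibility: given the optimal solution $(\XX^*_{\cdot|t},x^*_{\cdot|t},u^*_{\cdot|t})$ at time $t$, I build a candidate for time $t+1$ by shifting, $u_{k|t+1}:=u^*_{k+1|t}$ for $k=0,\dots,N-2$ and $u_{N-1|t+1}:=\kappa_f(x_{N-1|t+1})$, and I need to re-propagate under the \emph{new} model $f_{t+1}$. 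Here the subtlety is that $x_{0|t+1}=x_{t+1}=f(x_t,u^*_{0|t})$ is the true successor state, which by \cref{eq:PF_max_uncertainty_bounded_1} lies in $f_t(x_t,u^*_{0|t})\oplus\w_t(x_t,u^*_{0|t})\subseteq\XX^*_{1|t}$; I then rebuild the tubes $\XX_{k|t+1}$ from the recursion in \cref{ass:tubes} starting from a set containing $x_{t+1}$ and contained in $\XX^*_{1|t}$, using the monotonicity to conclude $\XX_{k|t+1}\subseteq\XX^*_{k+1|t}$ for all $k$. The constraint $(x,u)\in\ZZ$ and the terminal constraint $\XX_{N|t+1}\subseteq\XX_f$ for the candidate then follow from the terminal ingredients in \cref{ass:PF_terminal_cost} (note these are stated in terms of $\w_0$ and $f_0$, which by monotonicity dominate $\w_{t+1}$ and contain the relevant $f_{t+1}$ images).

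Next I would run the value-function decrease argument. Using the shifted candidate, the usual telescoping gives, \emph{if the model were frozen},
\begin{align*}
V_{t+1}^*(x_{t+1}) \le V_t^*(x_t) - \l(x_t,u_t) + \big[V_f(f(x_{N-1|t+1},\kappa_f)\!+\!w) - V_f(x^*_{N|t})\big] + \text{(model-mismatch terms)},
\end{align*}
where the bracketed terminal term is $\le 0$ by \cref{eq:terminal_cost_decrease}. The model-mismatch terms come from replacing $f_t$ by $f_{t+1}$ in the propagation of the shifted trajectory; by \cref{lemma:PF_kinky_satisfies_model_assumptions} each one-step discrepancy is bounded by $\alphatwo(h_t(z_t))$, and since $V_f$ and $\l$ are continuous on the compact set $\ZZ$ (hence uniformly continuous), propagating this through $N$ steps and through $V_f$, $\l$ yields a bound of the form $\sigma(h_t(x_t,u_t))$ for some $\sigma\in\kinf$. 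Thus
\begin{align*}
V_{t+1}^*(x_{t+1}) - V_t^*(x_t) \le -\l(x_t,u_t) + \sigma(h_t(x_t,u_t)) \le -\alphaeleven(\|x_t\|) + \sigma(h_t(x_t,u_t)),
\end{align*}
using \cref{ass:PF_cost_assumptions}. Combined with \cref{lemma:PF_uncertainty_size_2}, $C_{t+1}-C_t\le-\alphafive(h_t(x_t,u_t))$, I would form the composite Lyapunov-like function $W_t := V_t^*(x_t) + \beta(C_t)$ for a suitably chosen $\beta\in\kinf$ (or simply track the two quantities in parallel). Since $C_t$ is nonincreasing and bounded below, $C_t-C_{t+1}\to 0$, hence $\alphafive(h_t(x_t,u_t))\to 0$, hence $h_t(x_t,u_t)\to 0$, hence $\sigma(h_t(x_t,u_t))\to 0$.

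To finish, I would argue asymptotic convergence. Summing the $V^*$ inequality over $t=0,\dots,T$ gives $\sum_{t=0}^{T}\alphaeleven(\|x_t\|) \le V_0^*(x_0) - V_{T+1}^*(x_{T+1}) + \sum_{t=0}^{T}\sigma(h_t(x_t,u_t)) \le V_0^*(x_0) + \sum_{t=0}^{\infty}\sigma(h_t(x_t,u_t))$; the last sum is finite because $\sum_t\alphafive(h_t(x_t,u_t)) \le C_0 < \infty$ and $\sigma,\alphafive\in\kinf$ — here I may need to strengthen this slightly, e.g. by dominating $\sigma$ by a multiple of $\alphafive$ near zero, or by invoking that $h_t(x_t,u_t)\to0$ so that eventually $\sigma(h_t)\le\alphafive(h_t)$, which suffices for summability. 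With $\sum_t\alphaeleven(\|x_t\|)<\infty$ we get $\alphaeleven(\|x_t\|)\to0$, and since $\alphaeleven\in\kinf$ is invertible, $\|x_t\|\to0$, i.e. the closed loop converges to the origin; constraint satisfaction holds for all $t$ by the recursive feasibility established above, since $x_t\in\XX_{0|t}\subseteq\X$ and $u_t=u^*_{0|t}$ with $(x^*_{0|t},u^*_{0|t})\in\ZZ$ — actually, more carefully, $(x_t,u_t)\in\ZZ$ follows because $x_t\in\XX^*_{0|t}$ and the $k=0$ instance of constraint \cref{eq:PF_MPC_2} enforces $(\bar x_{0|t},u_{0|t})\in\ZZ$ for all $\bar x_{0|t}\in\XX^*_{0|t}$, in particular for $\bar x_{0|t}=x_t$.

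\textbf{Main obstacle.} The delicate point is the bookkeeping in recursive feasibility when the model switches from $f_t$ to $f_{t+1}$: one must verify that the shifted input together with re-propagated tubes still satisfies \emph{all} constraints of \cref{eq:PF_MPC} at time $t+1$, and this genuinely relies on the monotonicity chain \cref{eq:PF_max_uncertainty_bounded_2} $\Rightarrow$ \cref{ass:tubes} $\Rightarrow$ $\XX(z,t+1)\subseteq\XX(z,t)$ together with \cref{eq:PF_max_uncertainty_bounded_1}. The second mild obstacle is making the $\kinf$ estimates for the model-mismatch term $\sigma$ clean — propagating a one-step bound $\alphatwo(h_t(z_t))$ through $N$ compositions of functions that are only continuous (not Lipschitz) on $\ZZ$, and through $V_f$ and $\l$, requires a careful but routine uniform-continuity argument — and ensuring $\sum_t\sigma(h_t(x_t,u_t))<\infty$ from $\sum_t\alphafive(h_t(x_t,u_t))\le C_0$, which is where I would lean on $h_t(x_t,u_t)\to0$.
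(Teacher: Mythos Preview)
Your outline for recursive feasibility and for the ISS-type decrease inequality is correct and matches the paper: the paper also builds the shifted candidate, propagates it under the updated model $f_{t+1}$, bounds the discrepancy from the $f_t$-trajectory via \cref{lemma:PF_kinky_satisfies_model_assumptions} and \cref{eq:PF_nominal_model_1}, and uses uniform continuity of $\ell$ and $V_f$ on the compact set $\ZZ$ to obtain $V_{t+1}^*(x_{t+1}) - V_t^*(x_t) \le -\ell(x_t,u_t) + \alphasix(h_t(z_t))$ for some $\alphasix\in\kinf$. You also correctly extract $h_t(z_t)\to 0$ from \cref{lemma:PF_uncertainty_size_2}.

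The gap is in your final convergence step. You claim $\sum_t \sigma(h_t(z_t)) < \infty$ follows from $\sum_t \alphafive(h_t(z_t)) \le C_0$, but for general $\sigma,\alphafive\in\kinf$ this is false: take $\alphafive(s)=s^2$, $\sigma(s)=s$, and $h_t=1/t$. Both of your proposed patches reduce to the assertion $\sigma(s)\le c\,\alphafive(s)$ for small $s$, which you cannot expect here since $\sigma$ is assembled from continuity moduli of $\ell$, $V_f$, and $f_t$, whereas $\alphafive=\deltatwo^{-1}$ arises from an unrelated uniform-continuity inversion in the proof of \cref{lemma:PF_uncertainty_size_2}; the composite function $W_t=V_t^*+\beta(C_t)$ hits the same obstruction for any choice of $\beta$. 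The paper sidesteps the issue entirely: having established the ISS Lyapunov inequality and $h_t(z_t)\to 0$, it invokes the converging-input converging-state property of ISS systems (\cite{jiang2001input}) to conclude $x_t\to 0$ directly, with no need for summability of $\sigma(h_t)$.
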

\begin{proof}
The proof of Theorem \ref{theorem:TA_stability} is divided into three parts: in part a) we start by showing recursive feasibility of \eqref{eq:PF_MPC}, in part b) we then show that the origin is input-to-state stable, i.e., it holds that
\begin{align}
\label{eq:ISS}
V_{t+1}^*(x_{t+1})- V_t^*(x_t) \leq - \ell (x_t,u_t)+\alphasix (h_t(z_t)), \quad \forall t\in \Np,
\end{align}
for some $\alphasix \in \mathcal{K}_\infty$, and finally in c) we show that $h_t(z_t)\rightarrow 0$ for $t\rightarrow \infty$. The combination of the last two points implies that the system asymptotically converges to the origin.
\paragraph{a) Recursive feasibility:}
We introduce two different state and input trajectories. One, denoted by $(\hat{x}_{\cdot|t+1}, \hat{u}_{\cdot|t+1})$, considers the case where the nominal model is not updated, i.e., $f_{t+1}=f_t$, while the other, denoted by $(\bar{x}_{\cdot|t+1}, \hat{u}_{\cdot|t+1})$, the case where the nominal model is updated. Note that the latter is the only trajectory that is guaranteed to be feasible in Problem \eqref{eq:PF_MPC} at time $t+1$ and that both trajectories share the same input $\hat{u}_{\cdot|t+1}$.

Let define $(\hat{x}_{\cdot|t+1}, \hat{u}_{\cdot|t+1})$ as
\begin{align}
\label{eq:candidate_nominal}
    \hat{u}_{k|t+1}= u^*_{k+1|t}, \quad  k=0,\dots ,N-2, \quad
    \hat{u}_{N-1|t+1}= \kappa _f(x^*_{N|t}),\nonumber\\
\hat{x}_{0|t+1} = f_t(x_t, u_t), \quad \hat{x}_{k|t+1} = f_t(\hat{x}_{k-1|t+1}, \hat{u}_{k-1|t+1}), \quad k=1,\dots ,N-1.
\end{align}
The trajectory $(\hat{x}_{\cdot |t+1},\hat{u}_{\cdot |t+1})$ satisfies the state and input constraints in \eqref{eq:PF_MPC} at time $t+1$, provided that the initial condition is $x_{t+1}=f_t(x_t,u_t)$. Likewise, we define $(\bar{x}_{\cdot|t+1},\hat{u}_{\cdot|t+1})$ as
\begin{align}
\label{eq:candidate_updated}
    \bar{x}_{k+1|t+1}= f_{t+1}(\bar{x}_{k|t+1},\hat{u}_{k|t+1}),  \quad k=0,\dots ,N-1, \quad
    \bar{x}_{0|t+1}=f_{t+1}(x_t,u_t) = x_{t+1}. 
\end{align}
Note that $(\bar{x}_{\cdot|t+1},\hat{u}_{\cdot|t+1})$ is obtained by propagating the actual initial condition $x_{t+1}=f(x_t, u_t)$ through the updated dynamics $f_{t+1}$ under the input $\hat{u}_{\cdot|t+1}$, defined in \eqref{eq:candidate_nominal}.

Thanks to \cref{eq:PF_max_uncertainty_bounded} and \eqref{eq:terminal_conditions}, we have that the trajectory $(\bar{x}_{\cdot |t+1},\hat{u}_{\cdot |t+1})$ is feasible at time $t+1$ for Problem \eqref{eq:PF_MPC}. The existence of candidate tubes is ensured based on Assumption \ref{ass:tubes}.
\paragraph{b) Input-to-state stability:}
Starting from $k=0$, we have 
\begin{align}
\label{eq:upper_bound_k0}
\| \bar{x}_{0|t+1}-\hat{x}_{0|t+1} \| \stackrel{\cref{eq:candidate_nominal,eq:candidate_updated}}{ =} \| f_{t+1}(x_t,u_{t})-f_t(x_t,u_{t}) \| \stackrel{\eqref{eq:TA_model_updates_bounded_1}}{\leq} \gamma_0(h_t(z_t)),
\end{align}
where $\gamma_0 := \alphatwo$. Then, for $k=1, \dots, N$, we can use induction to show that
\begin{align}
\label{eq:TA_ISS_proof_2}
& \| \bar{x}_{k|t+1}-\hat{x}_{k|t+1} \| 
\stackrel{\cref{eq:candidate_nominal,eq:candidate_updated}}{ =} \| f_{t+1}(\bar{x}_{k-1|t+1},\hat{u}_{k-1|t+1})-f_t(\hat{x}_{k-1|t+1},\hat{u}_{k-1|t+1}) \|  \\
\stackrel{\hphantom{(5b),\,(13)}}{\leq} & \| f_{t+1}(\bar{x}_{k-1|t+1},\hat{u}_{k-1|t+1})-f_{t+1}(\hat{x}_{k-1|t+1},\hat{u}_{k-1|t+1})\| \nonumber \\ & + \|f_{t+1}(\hat{x}_{k-1|t+1},\hat{u}_{k-1|t+1})-f_t(\hat{x}_{k-1|t+1},\hat{u}_{k-1|t+1}) \| \nonumber \\
\stackrel{\eqref{eq:TA_model_updates_bounded_1},\,\eqref{eq:PF_nominal_model_1}}{\leq} & \, \alphaone( \| \bar{x}_{k-1|t+1}-\hat{x}_{k-1|t+1} \|) + \alphatwo(h_t(z_t)) 
\stackrel{\eqref{eq:upper_bound_k0}}{=} (\alphaone\circ\gamma_k + \alphatwo) (h_t(z_t)) 
\stackrel{\hphantom{(a)}}{=:}  \gamma _{k+1}(h_t(z_t)), \nonumber
\end{align}
where for $k \geq 0$ we recursively define $\gamma _{k+1}:= \alphaone\circ\gamma_k + \alphatwo$. Since class $\kinf$ functions are closed under composition and summation, \cite{kellett2014compendium}, we have $\gamma_{k+1} \in \kinf$. Therefore, from \cref{eq:TA_ISS_proof_2}, we have that there always exists some $\gamma_k\in \mathcal{K}_\infty$ such that for $k=0,1,\dots,N$ and for all $t\in\N$
\begin{align}
\label{eq:TA_ISS_proof_4}
\| \bar{x}_{k|t+1}-\hat{x}_{t|k+1} \| \leq \gamma _k(h_t(z_t)).
\end{align}
Next, since the stage cost $\ell$ is continuous and $\X$ is compact, there exists some $\alphaseven\in\kinf$ such that
\begin{align}
\label{eq:TA_ISS_proof_6}
\|\ell(x,u)-\ell(y,u)\| \leq \alphaseven(\|x-y\|),
\end{align}
for all $x,y\in\X$, \cite{limon2009input}, Lemma 1. We now upper-bound the following difference
\begin{align}
\label{eq:TA_ISS_proof_5}
% \sum_{k=0}^{N-2} \ell (\bar{x}_{k|t+1},\hat{u}_{k|t+1})-\ell (\hat{x}_{k|t+1},\hat{u}_{k|t+1}) & \stackrel{\cref{eq:TA_ISS_proof_6}}{\leq} \sum_{k=0}^{N-2} \alphaseven(\| \bar{x}_{k|t+1}-\hat{x}_{k|t+1} \|) \nonumber \\
% & \stackrel{\cref{eq:TA_ISS_proof_4}}{\leq} \sum_{k=0}^{N-2} (\alphaseven\circ\gamma_k)(h_t(z_t)) =: \alphaeight(h_t(z_t)),
\sum_{k=0}^{N-2} \ell (\bar{x}_{k|t+1},\hat{u}_{k|t+1})-\ell (\hat{x}_{k|t+1},\hat{u}_{k|t+1}) & \stackrel{\cref{eq:TA_ISS_proof_6,eq:TA_ISS_proof_4}}{\leq} \sum_{k=0}^{N-2} (\alphaseven\circ\gamma_k)(h_t(z_t)) =: \alphaeight(h_t(z_t)),
\end{align}
where $\alphaeight:=\sum_{k=0}^{N-2} \alphaseven\circ\gamma_k\in\kinf$. Since by optimality we have
 that $V_{t+1}^*(x_{t+1}) \leq V(\bar{x}_{\cdot |t+1},\hat{u}_{\cdot |t+1})$,
we can make use of \cref{eq:TA_ISS_proof_5} to obtain
\begin{align}
& \, V_{t+1}^*(x_{t+1}) - V_t^*(x_t) \leq V(\bar{x}_{\cdot |t+1}, \hat{u}_{\cdot |t+1})-V_t^*(x_t) \nonumber\\
\stackrel{\eqref{eq:TA_ISS_proof_5}}{\leq} & - \ell (x_t,u_t) + \alphaeight(h_t(z_t)) + \ell (\bar{x}_{N-1|t+1},\hat{u}_{N-1|t+1}) - V_f(x^*_{N|t}) +V_f(\bar{x}_{N|t+1}) \label{eq:TA_ISS_proof_7}
\end{align}
Since $V_f$ is continuous and $\X$ is compact, there exists a function $\alphanine\in\kinf$ such that
\begin{align}
\label{eq:TA_ISS_proof_9}
\|V_f(x)-V_f(y)\| \leq \alphanine(\|x-y\|),
\end{align}
for all $x,y\in\X$. In the following, we upper-bound the sum of the last three terms in \cref{eq:TA_ISS_proof_7}:
\begin{align}
& \ell(\bar{x}_{N-1|t+1},\hat{u}_{N-1|t+1})-V_f(x^*_{N|t}) +V_f(\bar{x}_{N|t+1}) \nonumber \\
\stackrel{\hphantom{(c)}}{=}\,&\ell(\bar{x}_{N-1|t+1},\hat{u}_{N-1|t+1}) - \ell(\hat{x}_{N-1|t+1},\hat{u}_{N-1|t+1}) + \ell(\hat{x}_{N-1|t+1},\hat{u}_{N-1|t+1}) - V_f(x^*_{N|t}) \nonumber \\
& + V_f(\hat{x}_{N|t+1}) + V_f(\bar{x}_{N|t+1}) - V_f(\hat{x}_{N|t+1}) \nonumber \\
& \hspace{-2.5em} \stackrel{\cref{eq:TA_ISS_proof_6,eq:TA_ISS_proof_9}}{\leq} \alphaseven(\|\hat{x}_{N-1|t+1}-\bar{x}_{N-1|t+1}\|) + \alphanine (\| \bar{x}_{N|t+1}-\hat{x}_{N|t+1} \|) + \ell(x^*_{N|t},\kappa_f(x^*_{N|t})) \nonumber \\
& - V_f(x^*_{N|t}) + V_f(f_t(x^*_{N|t},\kappa_f(x^*_{N|t}))) \nonumber \\
& \hspace{-2.5em} \stackrel{\eqref{eq:terminal_cost_decrease},\,\eqref{eq:TA_ISS_proof_4}}{\leq} (\alphaseven\circ\gamma_{N-1} + \alphanine\circ\gamma_N) (h_t(z_t)) =: \, \alphaten(h_t(z_t)), \label{eq:TA_ISS_proof_8}
\end{align}
where we defined $\alphaten:= \alphaseven\circ\gamma_{N-1} + \alphanine\circ\gamma_N\in\kinf$. Combining \cref{eq:TA_ISS_proof_7} and \cref{eq:TA_ISS_proof_8}, and choosing $\alphasix:=\alphaeight+\alphaten\in\kinf$, we obtain \cref{eq:ISS}. Since $V_t^*$ is an ISS Lyapunov function, the closed-loop system is input-to-state stable (compare Definition 1 and Theorem 1 in \cite{li2018input}).

\paragraph*{c) Convergence: }
Using Lemma \ref{lemma:PF_uncertainty_size_2}, it holds that
\begin{align*}
\sum_{k=0}^{t} C_{k+1}-C_k \leq - \sum_{k=0}^{t} \alphafive(h_k(z_k)) \Rightarrow \sum_{k=0}^{t} \alphafive(h_k(z_k)) \leq C_0-C_{t+1} \leq C_0.
\end{align*}
Since the inequality above holds for all $t\in \mathbb{N}$, by taking the limit we have that
\begin{equation*}
\lim_{t \to \infty}\sum_{k=0}^{t} \alphafive(h_k(z_k))  \leq C_0 \Rightarrow \ \lim_{t \to \infty} \alphafive(h_t(z_t)) = 0 \iff \lim_{t \to \infty} h_t(z_t)=0.
\end{equation*}
Using the fact that the closed-loop system is ISS, and since the term $h_t$ converges to zero for $t\rightarrow\infty$, we can leverage the converging-input converging-state property of ISS systems, as stated in Page 3, \cite{jiang2001input}, to conclude that $z_t\to 0$ as $t\to\infty$.
\end{proof}
\section{Numerical example}\label{section:EX}
We consider a two-dimensional system described by the following dynamics
\begin{align*}
x_{t+1} =
\underbrace{\begin{bmatrix}
1 & 0.4 \\ 0 & 0.56+0.1x_t^1
\end{bmatrix}x_t +
\begin{bmatrix}
0 \\ 0.4
\end{bmatrix}u_t}_{:=\hat{f}(x_t,u_t)} + 
\begin{bmatrix}
0\\ 0.9 x^1_t \operatorname{exp}(-x^1_t)
\end{bmatrix},
\end{align*}
where $x_t=(x_t^1,x_t^2)\in\R^2$ and $u_t\in\R$. We initialize the system with $x_0=(3,0)$. The input constraints are defined as $\U=\left\{ u\in\R: |u| \leq 2 \right\}$. The goal is to reach the origin while minimizing the stage cost $\ell(x,u) = x^TQx + u^TRu$, where $Q=\text{diag}(1,1)$, and $R=1$.

For simplicity, we do not consider state constraints or a terminal region. It is possible to prove, using \hyperref[lemma:PF_kinky_satisfies_model_assumptions]{Lemma 2}, Proposition 1 in \cite{limon2009input}, and Theorem 4 in \cite{boccia2014stability}, that the system is ISS for a sufficiently large control horizon.

We considered the case where the function $\hat{f}$ is known, while we use kinky inference to learn the unknown function $x^1\mapsto g(x^1):=0.9x^1\exp(-x^1)$. Our nominal model $f_t$ is therefore defined as $f_t(x,u) = \hat{f}(x_t,u_t) + g_t(x^1)$, where $g_0(x^1) = 0$, for all $x^1\in \R$ \rz{and $g_t$ is given by the mean of the upper and lower bounds of the Kinky inference}. We over-approximate the Lipschitz constant of $g$ in the operating range of states by choosing $q=1.5$, $\lambda=1$.

\rz{The MPC problem described here requires solving a nonlinear optimization problem including nonsmooth equality constraints. This is generally challenging. Future work should focus on adapting the theoretical analysis to the smoothed Kinky inference introduced in \cite{manzano2019output}, where the nonsmoothness is not present.}

We compare the performance of our scheme against an MPC scheme where $f_t(x,u)$ is chosen as $f_t(x,u)=\hat{f}(x,u)$. In \cref{fig:EX_state}, it is possible to see that the lack of learning prevents the second scheme from converging to the origin. On the contrary, our adaptive MPC scheme takes advantage of new data to successfully regulate the system to the origin.

\begin{figure}
\centering
% This file was created by matlab2tikz.
%
%The latest updates can be retrieved from
%  http://www.mathworks.com/matlabcentral/fileexchange/22022-matlab2tikz-matlab2tikz
%where you can also make suggestions and rate matlab2tikz.
%
\definecolor{mycolor1}{rgb}{0.00000,0.44700,0.74100}%
\definecolor{mycolor2}{rgb}{0.85000,0.32500,0.09800}%
\begin{tikzpicture}

\begin{axis}[%
width=0.33*7.636in,
height=0.4*2.478in,
at={(0.4*0.688in,0.4*3.725in)},
scale only axis,
xmin=0,
xmax=50,
xlabel style={draw=none},
xlabel={$t$},
ymin=0,
ymax=3,
ylabel style={font=\color{white!15!black}},
ylabel={$x_1$},
axis background/.style={fill=white},
axis x line*=bottom,
axis y line*=left,
xmajorgrids,
ymajorgrids,
label style = {font=\small},
tick label style = {font=\small},
legend style = {font=\small}
]
\addplot [color=mycolor1, dashed, line width=1.0pt, mark size=1.7pt, mark=*, mark options={solid, mycolor1}, forget plot]
  table[row sep=crcr]{%
0	3\\
5	1.57598875351202\\
10	0.875053096121617\\
15	0.523830434935219\\
20	0.280388895502288\\
25	0.112194405965785\\
30	0.035912686497195\\
35	0.00974722220840318\\
40	0.00236094399704327\\
45	0.000552035249753534\\
50	0.000132229290498923\\
};
\addplot [color=mycolor2, dashdotted, line width=1.0pt, mark size=3pt, mark=asterisk, mark options={solid, mycolor2}, forget plot]
  table[row sep=crcr]{%
0	3\\
5	1.73575334964331\\
10	1.33794533563531\\
15	1.30185835095716\\
20	1.30076120465623\\
25	1.26519375498442\\
30	1.22255624017008\\
35	1.19487993631545\\
40	1.18391686041031\\
45	1.18240240332947\\
50	1.18371614485705\\
};
\end{axis}

\begin{axis}[%
width=0.33*7.636in,
height=0.4*2.478in,
at={(5*0.688in,0.4*3.725in)},
scale only axis,
xmin=0,
xmax=50,
xlabel style={font=\color{white!15!black}},
xlabel={$t$},
ymin=-1.5,
ymax=0.5,
ylabel style={font=\color{white!15!black}},
ylabel={$x_2$},
axis background/.style={fill=white},
axis x line*=bottom,
axis y line*=left,
xmajorgrids,
ymajorgrids,
legend style={at={(0.97,0.03)}, anchor=south east, legend cell align=left, align=left, draw=white!15!black},
label style = {font=\small},
tick label style = {font=\small},
legend style = {font=\small}
]
\addplot [color=mycolor1, dashed, line width=1.0pt, mark size=1.7pt, mark=*, mark options={solid, mycolor1}]
  table[row sep=crcr]{%
0	0\\
1	-0.665574915091165\\
2	-1.23796934969391\\
3	-1.18920964279746\\
4	-0.467274208637409\\
5	0.154842062815713\\
10	-0.118763120985799\\
15	-0.228725185533624\\
20	-0.139062301489104\\
25	-0.0529095337221622\\
30	-0.0195486348511248\\
35	-0.00591787548166396\\
40	-0.00148311883335351\\
45	-0.000347898469873632\\
50	-7.73011070668803e-05\\
};
\addlegendentry{With learning}

\addplot [color=mycolor2, dashdotted, line width=1.0pt, mark size=3pt, mark=asterisk, mark options={solid, mycolor2}]
  table[row sep=crcr]{%
0	0\\
1	-0.665574915091165\\
2	-1.23796934563946\\
3	-1.05486088732678\\
4	-0.202211477834318\\
5	0.438021834223722\\
10	0.0239463692399181\\
15	-0.249325467602716\\
20	-0.224049905036908\\
25	-0.09795534504641\\
30	-0.00922118098838803\\
35	0.0200623323742264\\
40	0.0163697782964511\\
45	0.00565515511641657\\
50	-0.000838488562312889\\
};
\addlegendentry{Without learning}

\end{axis}
\end{tikzpicture}%
\caption{State evolution.}
\label{fig:EX_state}
\end{figure}
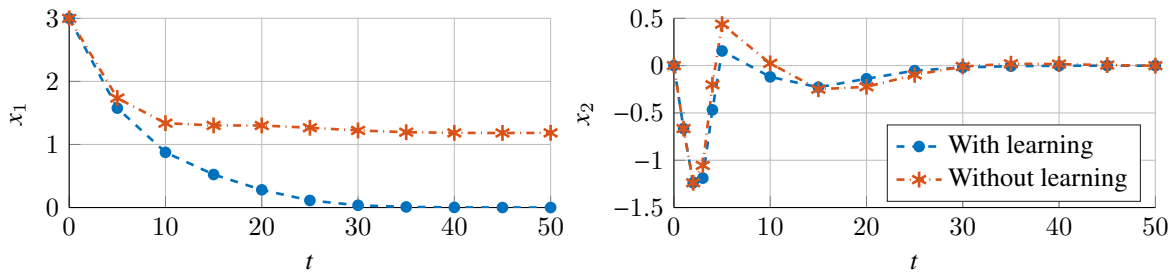
\section{Conclusion}
\label{section:conclusion}
We considered the problem of regulating an unknown nonlinear system using an adaptive model predictive control scheme. Specifically, we considered systems that are continuous and deterministic, while the online-updated nominal models are learned through a kinky inference. We showed that, for the considered class of systems, a standard adaptive MPC scheme, i.e., only with a standard tracking cost function, the system converges, in closed-loop, to the origin. We illustrated our findings in a simulation example.
%\appendix
%\input{Sections/6_appendix.tex}
\acks{The work of Riccardo Zuliani is supported by the Swiss National Science Foundation under NCCR Automation (grant agreement 51NF40\_180545). The work of Raffaele Soloperto is supported by the European Research Council under the H2020 Advanced Grant no. 787845 (OCAL).}
%%%%%%%%%%%%%%%%%%%%%%%%%%%%%%%%%%%%%%%%%%%%%%%%%%%%%%%%%%%%%%%%%%%%%%%%
% Bibliography
%%%%%%%%%%%%%%%%%%%%%%%%%%%%%%%%%%%%%%%%%%%%%%%%%%%%%%%%%%%%%%%%%%%%%%%%
\bibliography{ref.bib}
\end{document}